\let\OLDthebibliography\thebibliography
\renewcommand\thebibliography[1]{
  \OLDthebibliography{#1}
  \setlength{\parskip}{0pt}
  \setlength{\itemsep}{0pt plus 0.3ex}
}
\newtheorem{thm}{Theorem}[section]
\newtheorem{lemma}[thm]{Lemma}
\theoremstyle{definition}
\theoremstyle{remark}
\numberwithin{equation}{section}
\newcommand{\mmod}[1]{{\,\,\mathrm{mod}\,\,#1}}
\newcommand*\wrapletters[1]{\wr@pletters#1\@nil}
\def\wr@pletters#1#2\@nil{#1\allowbreak\if&#2&\else\wr@pletters#2\@nil\fi}
\def\alp{{\alpha}} 
\def\bet{{\beta}}  
\def\gam{{\gamma}}
\def\tet{{\theta}}
\def\eps{\varepsilon}
\def\le{\leqslant} \def\ge{\geqslant}
\def \bN {\mathbb N}
\def \cA {\mathcal A}
\def \cB {\mathcal B}
\def \cC {\mathcal C}
\def \cE {\mathcal E}
\def \cG {\mathcal G}
\def \cI {\mathcal I}
\def \cL {\mathcal L}
\def \cM {\mathcal M}
\def \cP {\mathcal P}
\def \cS {\mathcal S}
\begin{document}
\title[Triangles with prime hypotenuse]{Triangles with prime hypotenuse}

\author[Sam Chow]{Sam Chow}
\address{The Mathematical Sciences Research Institute, 17 Gauss Way, Berkeley, CA 94720-5070, USA;
Department of Mathematics, University of York, Heslington, York, YO10 5DD, UK}
\email{sam.chow@york.ac.uk}

\author[Carl Pomerance]{Carl Pomerance}
\address{The Mathematical Sciences Research Institute, 17 Gauss Way, Berkeley, CA 94720-5070, USA;
Department of Mathematics, Dartmouth College, Hanover, NH 03755, USA}
\email{carl.pomerance@dartmouth.edu}

\subjclass[2010]{Primary 11N25; Secondary 11N05, 11N36}
\keywords{Gaussian primes, Pythagorean triples}
\thanks{}
\date{}
\begin{abstract} 
The sequence $3, 5, 9, 11, 15, 19, 21, 25, 29, 35,\dots$ consists of 
odd legs in right triangles with integer side lengths and prime hypotenuse. We show that the upper density of this sequence is zero, with logarithmic decay. The same estimate holds for the sequence of even legs in such
triangles.  We expect our upper bound, which involves the Erd\H{o}s--Ford--Tenenbaum constant, to be sharp
up to a double-logarithmic factor. We also provide a nontrivial lower bound.
Our techniques involve sieve methods, the distribution of Gaussian primes in narrow sectors,
 and the Hardy--Ramanujan inequality.
\end{abstract}
\maketitle

\section{Introduction}
\label{intro}

The sequence OEIS A281505 concerns odd legs in right triangles with integer side lengths and prime hypotenuse. By the parametrisation of Pythagorean triples, these are positive integers of the form $x^2 - y^2$, where $x,y \in \bN$ and $x^2 + y^2$ is prime. Even legs are those of the form $2xy$, where $x, y \in \bN$ and $x^2 + y^2$ is an odd prime. 
Let $\cA$ be the set of odd legs, and $\cB$ the set of even legs that occur in such triangles. Consider the quantities
\[
\cA(N) = \{ n \in \cA: n \le N \}, \qquad \cB(N) = \{ n \in \cB: n \le N \}
\]
as $N \to \infty$. 

Let $\cP$ denote the set of primes. By a change of variables, observe that
\[
\cA(N) = \# \{ ab \le N: \frac12 (a^2+b^2) \in \cP \}.
\] 
Additionally, note that
\[
\cB(2N) = \cC(N),
\]
where
\[
\cC(N) = \# \{1< ab \le N: a^2+b^2 \in \cP \}.
\]
We estimate $\cC(N)$, which is equivalent to estimating $\cB(N)$ and similar to estimating $\cA(N)$.

Let
\[
\eta = 1 - \frac{1 + \log \log 2}{\log 2} \approx 0.086
\]
be the Erd\H{o}s--Ford--Tenenbaum constant. This constant is related to the number of distinct products in the multiplication table, and also arises in other contexts, for example, see \cite{For2008}, \cite{FLP}, and \cite{MPP}.

\begin{thm} \label{UpperBound}
We have 
\[
\cC(N) \ll \frac{N}{(\log N)^\eta} (\log \log N)^{O(1)}.
\]
\end{thm}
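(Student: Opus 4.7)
My plan is to split $\cC(N)$ by $\omega(n)$ at the critical threshold $K = \lceil (\log\log N)/\log 2 \rceil$, where $2^K$ --- the number of coprime factorizations of a typical $n$ --- matches the density $1/\log N$ of Gaussian primes near the relevant scale. Write $\cC(N) = \cC_1(N) + \cC_2(N)$, where $\cC_i(N)$ counts the $n \in \cC(N)$ with $\omega(n) > K$ and $\omega(n) \le K$ respectively.

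For $\cC_1(N)$ I apply the Hardy--Ramanujan inequality $\pi_k(N) \ll N(\log\log N + B)^{k-1}/((k-1)!\log N)$ and sum over $k > K$. Stirling yields $\cC_1(N) \ll N(\log N)^{-Q(1/\log 2)}(\log\log N)^{O(1)}$, where $Q(\lambda) = \lambda\log\lambda - \lambda + 1$ is the usual Poisson rate function, and a direct computation gives $Q(1/\log 2) = 1 - (1+\log\log 2)/\log 2 = \eta$.

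For $\cC_2(N)$ I exploit that each $n \in \cC$ has a coprime factorization $n = ab$ with $a^2+b^2 \in \cP$. Since $\gcd(a,b) = 1$ forces $\omega(ab) = \omega(a) + \omega(b)$, setting $j = \omega(a)$ and $l = \omega(b)$,
\[
\cC_2(N) \le \sum_{k \le K}\sum_{j+l = k}\#\{(a,b): \omega(a) = j,\ \omega(b) = l,\ ab \le N,\ a \le b,\ a^2+b^2 \in \cP\}.
\]
For fixed $a$ with $\omega(a) = j$, I bound the count of $b \le N/a$ with $\omega(b) = l$ and $a^2+b^2$ prime by a weighted sieve of order $\ll (N/a)(\log\log N)^{l-1}/((l-1)!(\log N)^2)$. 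Summing in $a$ via $\sum_{a \le \sqrt{N},\ \omega(a) = j}1/a \ll (\log\log N)^j/j!$, and in $j+l = k$ via the binomial identity $\sum_{j+l = k}1/(j!(l-1)!) = 2^{k-1}/(k-1)!$, this yields
\[
\cC_2(N) \ll \sum_{k \le K}\frac{N(2\log\log N)^{k-1}}{(k-1)!(\log N)^2}.
\]
Stirling shows the $k = K$ term is $\ll N(\log N)^{2-\eta}/(\log N)^2 = N(\log N)^{-\eta}$; summing over $k \le K$ only costs a further $\log\log N$ factor.

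The main obstacle is the joint sieve estimate for $\#\{b \le B: \omega(b) = l,\ a^2+b^2 \in \cP\}$: we need each of the two constraints to contribute an independent factor of $1/\log B$, despite their a priori non-independence. To achieve this I combine a Selberg-style sieve (yielding Brun--Titchmarsh for $a^2+b^2$ prime) with the Hecke-type distribution of Gaussian primes in narrow sectors, parametrizing by $\arg(a+bi)$ to localize and import the Hardy--Ramanujan weight on $\omega(b)$. The singular series coming from small prime divisors of $a$ must be controlled, and the resulting polylogarithmic losses are absorbed into the final $(\log\log N)^{O(1)}$ slack.
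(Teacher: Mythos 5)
Your decomposition and your final numerology are exactly right: splitting at $\omega(n)\approx(\log\log N)/\log 2$, bounding the large-$\omega$ range by Hardy--Ramanujan, and getting $2^{k-1}/(k-1)!$ from the binomial identity reproduces the paper's optimisation (the paper keeps the threshold as a free parameter $\alp$ and checks that $\alp=1/\log 2$ balances the two exponents at $\eta$, which is what your choice of $K$ hard-codes). The issue is the step you yourself flag as ``the main obstacle'': the hybrid bound
$\#\{b\le B:\ \omega(b)=l,\ a^2+b^2\in\cP\}\ll B(\log\log B)^{l-1}/((l-1)!(\log B)^2)$.
As stated, a sieve in the variable $b$ for the single irreducible polynomial $a^2+b^2$ saves only one factor of $\log B$; the second factor of $\log B$ has to come from the constraint $\omega(b)=l$, and the tools you propose for welding the two together --- Selberg/Brun--Titchmarsh for $a^2+b^2$ plus the distribution of Gaussian primes in narrow sectors --- do not do this. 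The sector results (Hecke, Harman--Lewis) control $\arg(a+bi)$, which is an archimedean condition; they say nothing about $\omega(b)$, and the paper uses them only for the \emph{lower} bound. So the proposal, as written, has a genuine gap at its central lemma.

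The paper's device, which closes exactly this gap, is worth absorbing. First discard the $n\le N$ with $P(n)\le N^{1/\log\log N}$ (negligible by de Bruijn). Every remaining $n=ab$ then has a prime factor $\ell>N^{1/\log\log N}$, so one may write $n=ab_0\ell$ with $a^2+b_0^2\ell^2$ prime and sieve in the \emph{single} variable $m=\ell$ over the polynomial $m(a^2+b_0^2m^2)$. This polynomial has two irreducible factors, so a one-dimensional Brun sieve with density $g(p)\in\{1/p,3/p\}$ legitimately saves $(\log N)^{-2}$ (up to $(\log\log N)^{O(1)}$ from the singular series), yielding $S(a,b_0)\ll N(\log\log N)^{O(1)}/(ab_0(\log N)^2)$. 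The anatomical condition $\omega(ab_0)\le L$ is then handled entirely separately, via the multinomial theorem applied to $\sum 1/(ab_0)$, so no joint ``sieve $\times$ Hardy--Ramanujan'' estimate is ever needed. Your claimed hybrid bound is in fact \emph{provable} by this same substitution $b=b_0\ell$ (it is how one proves Hardy--Ramanujan itself), but you would then be rederiving the paper's argument rather than invoking narrow sectors; I recommend you restructure the $\cC_2$ estimate along these lines.
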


Since every prime $p\equiv1\pmod4$ is representable as $a^2+b^2$ with $a,b$ integral,
we have $\cC(N)$ unbounded.  In fact, using the maximal order of the divisor function, we have
$\cC(N) \ge N^{1-o(1)}$ as $N\to\infty$.
We obtain a strengthening of this lower bound.

\begin{thm} \label{LowerBound}
We have, as $N\to\infty$,
\[
\cC(N) \ge\frac{N}{(\log N)^{\log4-1+o(1)}}.
\]
\end{thm}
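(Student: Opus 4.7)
The exponent $\log 4 - 1 = 2\log 2 - 1$ coincides with $Q(2)$, where $Q(\lambda) := \lambda\log\lambda - \lambda + 1$ is the Hardy--Ramanujan large-deviation exponent for $\omega$. By the Sathe--Selberg formula applied with $k = \lfloor 2\log\log N\rfloor$ and Stirling's approximation,
\[
|\cS| := \#\{n \leq N : n \text{ squarefree},\ \omega(n) = k\} \gg \frac{N}{(\log N)^{\log 4 - 1 + o(1)}},
\]
matching the target exponent exactly. The plan is to show that a positive proportion of $\cS$ lies in $\cC(N)$.

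For $n \in \cS$, set $r(n) := \#\{(a,b): ab = n,\ \gcd(a,b) = 1,\ a^2+b^2 \in \cP\}$. Squarefree-ness gives $2^k = (\log N)^{\log 4}$ ordered unitary factorisations of $n$, and heuristically each produces a prime $a^2+b^2$ with density $\asymp 1/\log N$, so the average of $r(n)$ over $\cS$ should be $\asymp (\log N)^{\log 4 - 1}$, tending to infinity. To make this rigorous I would apply Cauchy--Schwarz:
\[
\#\{n \in \cS : r(n) \geq 1\} \ \geq\ \frac{\bigl(\sum_{n \in \cS} r(n)\bigr)^2}{\sum_{n \in \cS} r(n)^2}.
\]
The first moment $\sum_{\cS} r(n)$ counts Gaussian primes $a+bi$ in the open first quadrant with $ab \in \cS$, and is lower-bounded by combining the equidistribution of Gaussian primes in narrow sectors with a Hardy--Ramanujan-type restriction to the $\omega$-condition: this should give $\sum_{\cS} r(n) \gg |\cS|(\log N)^{\log 4 - 1}$.

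The main obstacle is the second moment $\sum_{\cS} r(n)^2$, which counts quadruples $(a_1,b_1,a_2,b_2)$ of coprime pairs with $a_1b_1 = a_2b_2 \in \cS$ and both $a_i^2+b_i^2$ prime. Using the standard parametrisation $(a_1,b_1,a_2,b_2) = (d\alpha, e\beta, d\beta, e\alpha)$ with $\gcd(\alpha,\beta)=1$, the two primality conditions become $d^2\alpha^2 + e^2\beta^2 \in \cP$ and $d^2\beta^2 + e^2\alpha^2 \in \cP$. Applying an upper-bound sieve to each primality condition (with the parameters running over appropriate dyadic ranges) and summing over $(d,e,\alpha,\beta)$ should yield a bound of order $O\bigl(|\cS|(\log N)^{2(\log 4 - 1)}\bigr)$, at which point Cauchy--Schwarz gives $\cC(N) \gg |\cS| \gg N/(\log N)^{\log 4 - 1 + o(1)}$. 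The delicate point is ensuring the sieve losses in handling two simultaneous prime conditions do not erode the $(\log N)^{\log 4 - 1}$ saving relative to the diagonal contribution.
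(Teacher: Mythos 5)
Your skeleton---Cauchy--Schwarz against a second moment counting coincidences $ab=cd$ with both $a^2+b^2$ and $c^2+d^2$ prime, the $(d\alpha,e\beta,d\beta,e\alpha)$ parametrisation of the off-diagonal, and an upper-bound sieve for the simultaneous primality conditions---is exactly the paper's strategy, and your exponent bookkeeping is self-consistent. But there is a genuine gap in your first moment. You need
\[
\sum_{n\in\cS}r(n)\ \gg\ |\cS|(\log N)^{\log 4-1}\ \asymp\ N(\log N)^{o(1)},
\]
i.e.\ that for almost all (up to $(\log N)^{o(1)}$) of the $\asymp N$ pairs $(a,b)$ with $ab\le N$ and $a^2+b^2$ prime, the product $ab$ is squarefree with \emph{exactly} $k=\lfloor 2\log\log N\rfloor$ prime factors. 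Neither tool you invoke can deliver this: Harman--Lewis controls only the angular distribution of Gaussian primes and says nothing about the multiplicative anatomy of $ab$, while the Hardy--Ramanujan inequality is purely an upper bound---it can discard pairs with $\omega(ab)$ too large, but it cannot show the surviving mass concentrates on the single value $\omega(ab)=k$ (the complementary event $\omega(ab)\ne k$ has density $1-O(1/\sqrt{\log\log N})$ among all integers, so there is nothing to subtract off), nor can it enforce squarefreeness. A lower bound for Gaussian primes with prescribed anatomy of $ab$ is essentially the open problem the paper flags in its final section as the obstruction to the conjecturally sharp lower bound $N/(\log N)^{\eta+o(1)}$.

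The repair is to replace your exact conditions by one-sided ones, which is what the paper does: apply Cauchy--Schwarz to the set $\cL$ of pairs with $a^2+b^2\in\cP$, $P(ab)>N^{1/\log\log N}$, and $\omega(a),\omega(b)\le(1+\eps)\log\log N$. These are exceptional-set removals (de Bruijn for smooth numbers; Brun plus Hardy--Ramanujan for large $\omega$), so $\#\cL\gg N$ follows from Harman--Lewis alone, and the second moment is $\ll N(\log N)^{\log 4-1+O(\eps)}$; your set $\cS$ and the Sathe--Selberg count never enter. A secondary but substantive issue is your second moment: a sieve needs a variable running over a long interval, and with only the two conditions $d^2\alpha^2+e^2\beta^2,\,d^2\beta^2+e^2\alpha^2\in\cP$ you save at most $(\log N)^{-2}$, which is one logarithm short---summing $1/(de\alpha\beta_0)$ over the four-fold decompositions with $\omega$ bounded by $2\log\log N$ already costs $(\log N)^{2+\log 4+o(1)}$, so a $(\log N)^{-3}$ saving is required to land on $\log 4-1$. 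The paper obtains the third logarithm by extracting the largest prime factor $\ell>N^{1/\log\log N}$ of $ab=cd$ and sieving over $\ell$, so that $\ell\in\cP$ becomes a third condition; ``parameters running over appropriate dyadic ranges'' does not supply this.
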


Note that $\log4-1\approx 0.386$. Since $\cB(2N) = \cC(N)$, we obtain the same bounds for $\cB(N)$. By essentially the same proofs, one can also deduce these bounds for $\cA(N)$.

To motivate the outcome, consider the following heuristic. There are typically $\approx(\log n)^{\log 2}$ divisors of $n$, which follows from the normal number of prime factors of $n$, a result of Hardy and Ramanujan \cite{HR}. 
Moreover, given a factorisation $n=ab$, the ``probability"
 of $a^2+b^2$ being prime is roughly $(\log n)^{-1}$. Since $\log 2 < 1$, we expect the proportion $\cC(N)/N$ to decay logarithmically. In the presence of biases and competing heuristics, this \emph{prima facie} prediction should be taken with a few grains of salt. We use Brun's sieve and the Hardy--Ramanujan inequality to formally establish our bounds.  In addition, for Theorem \ref{LowerBound} we use a result of Harman and Lewis \cite{HL2001}
 on the distribution of Gaussian primes in narrow sectors of the complex plane.

We write $\cP$ for the set of primes. We use Vinogradov and Landau notation. As usual, we write $\omega(n)$ for the number of distinct prime divisors of $n$, and $\Omega(n)$ for the number of prime divisors of $n$ counted with multiplicity. The symbols $p$ and $\ell$ are reserved for primes, and $N$ denotes a large positive real number. 

\section*{Acknowledgments and a dedication}
The authors were supported by the National Science Foundation under Grant No. DMS-1440140 while in residence at the Mathematical Sciences Research Institute in Berkeley, California, during the Spring 2017 semester. The authors thank  John Friedlander and Roger Heath-Brown for helpful comments and
Tomasz Ordowski for suggesting the problem.

This year (2017) is the 100th anniversary of the publication of the 
paper {\it On the normal number of prime factors of
a number $n$}, by Hardy and Ramanujan, see \cite{HR}.
Though not presented in such terms, their paper ushered in the subject of probabilistic number theory.  
Simpler proofs have been found, but the original
proof contains a very useful inequality, one which we are happy to use yet again.  We dedicate
this note to this seminal paper.

\section{An upper bound}
\label{UpperSection}

In this section, we establish Theorem \ref{UpperBound}. The Hardy--Ramanujan inequality \cite{HR}
states that there exists a positive constant $c_0$ such that uniformly for $i \in \bN$  and $N\ge3$ we have
\[
\# \{n \le N: \omega(n) = i \} \ll \frac N {\log N} 		
\frac{(\log \log N + c_0)^{i-1}}{(i-1)!}.
\]	
By Mertens's theorem and the fact that the sum of the reciprocals of prime powers higher than the
first power converges, there is a positive constant $c_1$ such that
\begin{equation}
\label{eq:c1}
\sum_{p^\nu\le N}p^{-\nu}\le\log\log N+c_1 \qquad (N \ge 3).
\end{equation}

Let $\alp$ be a parameter in the range $1 < \alp < 2$, to be specified in due course. We begin by bounding the size of the exceptional set
\[
\cE_1 := \{ n \le N: \omega(n) > L \},
\]
where 
\begin{equation} \label{Ldef}
L = \lfloor \alp \log \log N \rfloor.
\end{equation}
By the Hardy--Ramanujan inequality, we have
\[
\# \cE_1
\ll \frac{N}{\log N} \sum_{i > L} \frac{(k+c_0)^{i-1}}{(i-1)!}= \frac{N}{\log N}\sum_{j\ge L}\frac{(k+c_0)^j}{j!},
\]
where $k= \log \log N$, and therefore
\[
\frac{\log N}N \# \cE_1 \ll \frac{(k+c_0)^{L}} {L!}
<\left(\frac{(k+c_0)e}{L}\right)^L=\left(\frac e\alp+O\left(\frac1k\right)\right)^L.
\]
Note that we have used here the elementary inequality $1/L!<(e/L)^L$, which holds for all positive
integers $L$ and follows instantly from the Taylor series for $e^L$.
Thus,
\begin{equation} \label{FirstTerm}
\# \cE_1 \ll \frac{N }{(\log N)^{1-\alp + \alp \log \alp}}.
\end{equation}

For an integer $n\ge2$,  write $P(n)$ for the largest prime factor of $n$, and let $P(1)=1$. By de Bruijn \cite[Eq. (1.6)]{deB} we may bound the size of the exceptional set
\[
\cE_2 := \{ n \le N: P(n) \le N^{1/\log \log N}\}
\]
by $N/(\log N)^2$ for all sufficiently large numbers $N$.  (Actually, the denominator may be taken as any fixed
power of $\log N$.)

Next, we estimate 
\[
\cC^*(N):= \# \{ ab \le N: ab \notin (\cE_1 \cup \cE_2),~ a^2+b^2 \in \cP \}.
\]
For $n$ counted by $\cC^*(N)$, we see by symmetry that we have $n = ab_0 \ell$ for some $a,b_0, \ell \in \bN$ with $\ell > N^{1/\log\log N}$ prime and $a^2 + b_0^2 \ell^2$ prime. Thus
\begin{equation} \label{pause}
\cC^*(N) \le 2 \sum_{\substack{{ab_0 \le N^{1- 1/\log \log N} }\\ \omega(ab_0) \le L}} S(a, b_0),
\end{equation}
where 
\[
S(a,b_0) = \sum_{\substack{ N^{1/\log \log N} < \ell \le \frac N{ab_0} \\ \ell,\;a^2 + b_0^2 \ell^2 \in \cP}}1.
\]

We turn our attention to $S(a,b_0)$. We may assume that $ab_0$ is even and $\gcd(a,b_0) = 1$, for otherwise $S(a,b_0)= 0$. Observe that
\[
S(a, b_0) \le \# \{ m \in (z,X]: \gcd(m(a^2+b_0^2 m^2), P(z)) = 1 \},
\]
where
\[
z = N^{(\log \log N)^{-3}}, \quad P(z) = \prod_{p < z} p, \quad X = \frac N{ab_0}.
\]
To bound this from above, we apply Brun's sieve \cite[Corollary 6.2]{FI2010} with
\[
\cA = \Bigl \{ m(a^2+b_0^2 m^2): 1 \le  m \le X \Bigr \},
\]
and with the completely multiplicative density function $g$ defined by 
\[
g(p) = \begin{cases}
1/p, & \text{if } p \mid ab_0 \text{ or } p \not \equiv 1 \mmod 4  \\
3/p, & \text{if } p \nmid a b_0, ~p \equiv 1 \mmod 4.
\end{cases}
\]

For this to be valid, we need to check that
\begin{equation} \label{BrunHyp}
|r_d(\cA)| \le g(d) d \qquad (d \mid P(z)),
\end{equation}
where
\[
r_d(\cA) = |\cA_d| - Xg(d), \quad \cA_d = \{ n \in \cA: n \equiv 0 \mmod d \}
\]
and $P(z) = \prod_{p < z} p$. We begin by noting that if $p \in \cP$ then the congruence
\[
m (a^2 + b_0^2 m^2) \equiv 0 \mmod p
\]
has $g(p)p$ solutions $m \mmod p$. Observe that any divisor $d$ of $P(z)$ must be squarefree; thus, by the Chinese remainder theorem, the congruence 
\[
m (a^2 + b_0^2 m^2) \equiv 0 \mmod d
\]
has $g(d)d$ solutions $m \mmod d$. By periodicity, we now have
\[
r_d(\cA) = \# \{ m \le M: m(a^2 + b_0^2 m^2) \equiv 0 \mmod d \} - Mg(d),
\]
where $M = X - d \lfloor X/d \rfloor$. This confirms \eqref{BrunHyp}, since $0 \le M < d$ and $0 < g(d) \le 1$.

We also need to check that
\[
\log z \le \frac{\log X}{c \log ( V(z)^{-1} \log X)},
\]
where $V(z) = \prod_{p < z} (1- g(p))$, and where
\[
(c/e)^c = e, \qquad c \approx 3.59.
\]
This follows from the inequalities
\[
X \ge N^{1/\log \log N}, \qquad V(z) \gg (\log z)^{-2}.
\]

Now \cite[Corollary 6.2]{FI2010} tells us that
\[
S(a,b_0) \le X^{3/4} + 2XV(z) \ll \frac{N(\log \log N)^{O(1)}}{(\log N)^2 ab_0}.
\]
(Note that we might equally have used the version of Brun's sieve in \cite[p. 68]{HRi}, which
is less precise, but somewhat easier to utilise.)
Substituting this into \eqref{pause} yields
\begin{equation} \label{subI}
\cC^*(N) \le \frac{N(\log \log N)^{O(1)}}{(\log N)^2} I,
\end{equation}
where
\[
I = \sum_{j+k \le L} \sum_{\substack{a \le N \\ \omega(a)=j}}  a^{-1}  \sum_{\substack{b_0 \le N \\ \omega(b_0)=k}}  b_0^{-1}.
\]

It follows from the multinomial theorem that
\begin{align*}
I &\le \sum_{j+k \le L} j!^{-1} \Bigl(\sum_{p^v \le N} p^{-v}\Bigr)^j
k!^{-1} \Bigl(\sum_{p^v \le N} p^{-v}\Bigr)^k \\
&= 
\sum_{j+k \le L} (j+k)!^{-1} {j+k \choose j} \Bigl( \sum_{p^v \le N} p^{-v} \Bigr)^{j+k}.
\end{align*}
Letting $m=j+k$, the binomial theorem now gives
\[
I \le \sum_{m  \le L} m!^{-1} \Bigl( 2 \sum_{p^v \le N} p^{-v} \Bigr)^m \le \sum_{m \le L} \frac{(2 \log \log N + 2c_1)^m}{m!},
\]
where $c_1$ is as in \eqref{eq:c1}. In view of \eqref{Ldef}, we now have
\begin{align*}
I& \ll L!^{-1} (2 \log \log N + 2c_1)^L<\left(\frac{2e\log\log N+2ec_1}{L}\right)^L\\
&=\left(\frac{2e}{\alp}+O\left(\frac1L\right)\right)^L\ll(\log N)^{\alp(1+\log2- \log \alp)}.
\end{align*}
Substituting this into \eqref{subI} yields
\begin{equation} \label{CstarBound}
\cC^*(N) \le N(\log \log N)^{O(1)} (\log N)^{\alp(1+\log 2 - \log \alp) - 2}.
\end{equation}

By \eqref{FirstTerm}, our estimate for $\#\cE_2$, and \eqref{CstarBound}, we have
\[
\cC(N) \le \cC^*(N) + \# \cE_1 + \# \cE_2 \le N (\log \log N)^{O(1)} (\log N)^{-\cM},
\]
where 
\[
\cM = \min \{ 1 - \alp + \alp \log \alp, ~ 2 ,~ 2 - \alp - \alp \log 2 + \alp \log \alp \}.
\]
We now choose  $1 < \alp < 2$ so as to maximise $\cM$.
One might guess that this $\alp$ solves
\[
1- \alp + \alp \log \alp = 2 - \alp - \alp \log 2 + \alp \log \alp,
\]
and indeed $\alp = (\log 2)^{-1}$ does maximise $\cM$ on the interval $(1,2)$. 
With this choice of $\alp$, we have
\[
\cM = 1 - \frac{1 + \log \log 2}{\log 2} = \eta,
\]
completing the proof of Theorem \ref{UpperBound}.

\section{A lower bound}
\label{LowerSection}

In this section, we establish Theorem \ref{LowerBound}. Let
\[
\cL_0 = \{ (a,b) \in \bN^2:1< ab \le N,~ a^2 + b^2 \in \cP \}.
\]
Writing $P(n)$ for the largest prime factor of $n>1$, and $P(1) = 1$, put
\[
\cL_1 = \{(a,b) \in \cL_0: P(ab) \le N^{1/\log \log N} \}.
\]
Let $\eps$ be a small positive real number, and let
\begin{align*}
\cL_2 &= \{(a,b) \in \cL_0 \setminus \cL_1: \omega(a) > (1+\eps) \log \log N \}, \\
\cL_3 &= \{(a,b) \in \cL_0 \setminus \cL_1: \omega(b) > (1+\eps) \log \log N \}.
\end{align*}
Finally, write
\[
\cL = \cL_0 \setminus (\cL_1 \cup \cL_2 \cup \cL_3).
\]
As we seek a lower bound, we are free to discard some inconvenient elements of $\cC(N)$. Thus, by the Cauchy--Schwarz inequality, we have
\begin{equation} \label{Cauchy}
\cC(N) \ge (\#\cL)^2 / \cS(N),
\end{equation}
where $\cS(N)$ is the number of quadruples $(a,b,c,d) \in \bN^4$ such that
\[
ab=cd\hbox{ and }(a,b),(c,d)\in\cL.
\]

We first show that
\begin{equation} \label{L0bound}
\# \cL_0 \gg N.
\end{equation}
For this, we use existing work counting Gaussian primes in narrow sectors. For convenience, we state the relevant result \cite[Theorem 2]{HL2001}. 

\begin{thm}[Harman--Lewis]
\label{HL}
Let $X$ be a large positive real number, and let $\bet, \gam$ be real numbers in the ranges
\[
0 \le \bet \le \pi/2, \qquad X^{-0.381} \le \gam \le \pi/2.
\]
Then 
\[
\# \{ (a,b) \in \bN^2: a^2 + b^2 \in \cP \cap [0,X], ~\arctan(b/a) \in [\beta, \beta + \gam) \} \gg \frac{\gam X}{\log X}.
\]
The implied constant is absolute.
\end{thm}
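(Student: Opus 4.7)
The plan is to work in the Gaussian integers $\bZ[i]$ and separate the angular from the multiplicative information using Hecke Grossencharacters. Set $\lambda_k(\alpha) = (\alpha/|\alpha|)^{4k}$ for $k \in \bZ$ and nonzero $\alpha \in \bZ[i]$; these factor through ideals and constitute a complete family of Hecke characters detecting $\arg(\alpha) \bmod \pi/2$. A smoothed indicator of the arc $[\beta, \beta+\gamma)$ has a Fourier expansion in the $\lambda_k$ with coefficients $c_k$ satisfying $|c_k| \ll \min(\gamma, 1/(|k|^2 \gamma))$.

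Substituting this expansion into the sum over Gaussian primes of norm at most $X$, the $k = 0$ term gives the expected main contribution $\gg \gamma X / \log X$ via the Gaussian prime number theorem. The task thus reduces to showing power-saving cancellation, uniformly in $k$, for the twisted sums
\[
\pi(X, \lambda_k) := \sum_{\substack{\pi\in\bZ[i]\text{ prime}\\ N(\pi)\le X}} \lambda_k(\pi), \qquad k \ne 0,
\]
summed against the $c_k$ over $|k| \ll 1/\gamma$. Zero-density estimates for the Hecke $L$-functions $L(s, \lambda_k)$ alone yield the conclusion for $\gamma \ge X^{-c}$ with some small $c > 0$, but pushing $c$ to $0.381$ requires more.

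To reach that exponent I would apply Harman's alternating sieve directly to the set $\cA$ of ideals in the sector with norm at most $X$. Iterating Buchstab's identity on $S(\cA, X^{1/2})$ and discarding terms with the favourable sign, one splits the resulting expression into Type~I sums, which are controlled by elementary lattice counting for $\cA_\fd$, and Type~II bilinear sums of the form $\sum_{\fm\fn \in \cA} a_\fm b_\fn$ with $N(\fm), N(\fn)$ in a prescribed range. The Type~II estimates ultimately come from large-sieve and mean-value bounds for $L(s, \lambda_k)$, providing cancellation of size $X^{1-\delta}$ with $\delta$ depending on the bilinear range.

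The main obstacle is arranging the Buchstab decomposition so that every surviving Type~II sum lies in a range where the requisite bilinear bound can be proved, while keeping the Type~I contributions under control. The numerical exponent $0.381$ marks the frontier of what this balance achieves with currently available mean-value estimates for Hecke $L$-functions; any improvement (e.g.\ a sharper sixth-moment bound) would propagate directly to a better admissible range for $\gamma$.
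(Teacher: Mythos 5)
This statement is not proved in the paper at all: it is quoted verbatim from Harman and Lewis \cite[Theorem 2]{HL2001}, and the authors use it as a black box to establish \eqref{L0bound}. So the relevant comparison is with the argument in that reference, and your outline does correctly identify its architecture: the Hecke characters $\lambda_k(\alpha)=(\alpha/|\alpha|)^{4k}$ to detect the sector, the observation that $L$-function technology alone only handles $\gamma\ge X^{-c}$ for a smaller $c$, and the use of Harman's alternating sieve (Buchstab iterations, discarding terms of favourable sign, Type~I/Type~II information from mean values of Hecke $L$-functions) to push the exponent to $0.381$. That you aim only for a lower bound of the correct order, rather than an asymptotic, is also the right call: an asymptotic is not available unconditionally at this range, and the sieve inherently sacrifices it.

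As a proof, however, the proposal has a genuine gap: everything that makes the theorem true is deferred. You do not specify the Buchstab decomposition, the admissible Type~II ranges, the actual bilinear and Type~I estimates, or the numerical verification that the discarded positive terms leave a positive remainder --- and the exponent $0.381$ is precisely the output of that computation, so it cannot be asserted without doing it. The paragraph beginning ``The main obstacle is arranging the Buchstab decomposition\dots'' concedes exactly this. One further point of caution: your opening reduction asks for ``power-saving cancellation, uniformly in $k$,'' in $\pi(X,\lambda_k)$ for all $|k|\ll1/\gamma$; if that were available down to $\gamma=X^{-0.381}$ it would give an asymptotic, which is beyond current technology, so the sieve step is not an optional refinement but the entire argument. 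In the context of the present paper, none of this machinery is needed --- the authors simply cite the result --- so the appropriate ``proof'' here is the citation, and a from-scratch derivation would amount to reproving the main theorem of \cite{HL2001}.
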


For positive integers $i \le \frac{ \log N}{10 \log 2}$, we apply this with 
\[
\bet = \gam = \frac \pi {2^{i+1}}, \qquad X = 2^{i-2}N.
\]
By Jordan's inequality
\[
\frac 2\pi x \le \sin x \le x \qquad (0 \le x \le \pi/2),
\]
observe that if $a,b \in \bN$, $a^2 + b^2 \le X$ and $\tet = \arctan(b/a) \le \pi 2^{-i}$ then
\[
ab \le X \sin \tet \cos \tet = \frac12 X \sin (2 \tet) \le X \tet \le N2^{i-2} \cdot \frac \pi {2^i} \le N.
\]
Thus
\[
\# \cL_0 \gg \sum_{i \le \frac{\log N}{10 \log 2}} \frac N{\log N} \gg N,
\]
confirming \eqref{L0bound}.

Next, we show that $\# \cL_j = o(N)$ ($j=1,2,3$).

\begin{lemma} \label{discard0}
We have $\# \cL_1 = o(N)$.
\end{lemma}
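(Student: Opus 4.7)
The plan is to drop the primality condition $a^2+b^2\in\cP$, which only inflates the count, and bound $\#\cL_1$ by the number of ordered pairs $(a,b)\in\bN^2$ with $ab\le N$ and $P(ab)\le y$, where $y:=N^{1/\log\log N}$. Grouping by the product $m=ab$, each such $m$ is $y$-smooth and contributes at most $d(m)$ pairs $(a,b)$, where $d$ is the divisor function. Thus
\[
\#\cL_1\le\sum_{\substack{m\le N\\ P(m)\le y}}d(m).
\]

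I would then apply the Cauchy--Schwarz inequality to separate the divisor weights from the smoothness restriction, obtaining
\[
\#\cL_1\le\Bigl(\sum_{m\le N}d(m)^2\Bigr)^{\!1/2}\Bigl(\#\{m\le N:P(m)\le y\}\Bigr)^{\!1/2}.
\]
The first factor is $O(N(\log N)^3)$ by the classical Ramanujan identity $\sum_n d(n)^2n^{-s}=\zeta(s)^4/\zeta(2s)$. The second factor is controlled by the de Bruijn estimate \cite[Eq.~(1.6)]{deB} already invoked in Section~\ref{UpperSection}: for every fixed $C>0$ one has $\#\{m\le N:P(m)\le y\}\ll N/(\log N)^C$ for all sufficiently large $N$. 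Combining these inputs yields
\[
\#\cL_1\ll N(\log N)^{(3-C)/2},
\]
which is $o(N)$ upon choosing any $C>3$, completing the proof.

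The only conceptual point worth flagging is that a uniform pointwise bound such as $d(m)\ll_\eps m^\eps$ cannot succeed here, since the factor $N^\eps$ swamps any polylogarithmic saving coming from smoothness. Cauchy--Schwarz works precisely because the second moment of $d$ grows only polylogarithmically, a growth easily absorbed by the super-polylogarithmic gain of the de Bruijn estimate at smoothness parameter $u=\log\log N$; I do not anticipate any further obstacle.
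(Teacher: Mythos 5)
Your argument is correct, but it takes a genuinely different route from the paper. The paper exploits the elementary fact that $P(ab)\le y$ forces $P(b)\le y$: assuming by symmetry that $a\le\sqrt N$, it applies de Bruijn's estimate directly to the inner sum over $y$-smooth $b\le N/a$, getting $\ll N/(a(\log N)^2)$, and then sums $1/a$ over $a\le\sqrt N$ to conclude $\#\cL_1\ll N/\log N$. You instead collapse the pair $(a,b)$ to its product $m=ab$, pick up a divisor-function weight $d(m)$, and decouple the weight from the smoothness condition via Cauchy--Schwarz, using the classical second moment $\sum_{m\le N}d(m)^2\ll N(\log N)^3$ together with the same de Bruijn input $\Psi(N,N^{1/\log\log N})\ll N/(\log N)^C$. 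Both proofs rest on the same smooth-number estimate and both give power-of-logarithm savings (yours for any exponent $(C-3)/2$, the paper's as stated $N/\log N$, though it too could be sharpened by taking a higher power in de Bruijn). The paper's version is shorter and avoids the divisor function altogether; yours requires the extra moment estimate but is a clean and reusable template whenever a divisor-type weight sits on a sparse set, and your closing remark correctly identifies why the pointwise bound $d(m)\ll_\eps m^\eps$ would not suffice here. No gaps.
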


\begin{proof}
By de Bruijn \cite[Eq. (1.6)]{deB}, we have
\[
\sum_{a \le \sqrt N} \sum_{\substack{b \le N/a \\ P(b) \le N^{1/\log \log N}}} \ll \sum_{a \le \sqrt N} \frac N{a (\log N)^2} \ll \frac N{\log N}.
\]
Thus, by symmetry, we have $\# \cL_1 \ll \frac N{\log N}$.
\end{proof}

\begin{lemma} \label{discard} We have
\[
\# \cL_j = o(N) \qquad (j = 2,3).
\]
\end{lemma}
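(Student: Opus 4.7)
By the symmetry $(a,b) \mapsto (b,a)$ --- which preserves $\cL_0$ and $\cL_1$ --- we have $\#\cL_3 = \#\cL_2$, and so it suffices to establish $\#\cL_2 = o(N)$. Put $K = \lceil(1+\eps)\log\log N\rceil$. The plan is to discard every condition in the definition of $\cL_2$ apart from $ab \le N$, $a^2+b^2 \in \cP$ and $\omega(a) \ge K$, then to fix $a$ and sieve over $b$.

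For each $a$ with $1 \le a \le N/2$, running Brun's sieve on the polynomial $b \mapsto a^2+b^2$ (as in the sieve setup of Section \ref{UpperSection}, but now with $b$ playing the role of $\ell$) gives
\[
\#\{1 \le b \le N/a : a^2+b^2 \in \cP\} \ll \frac{N/a}{\log(N/a)}(\log\log N)^{O(1)},
\]
uniformly in $a$, where the $(\log\log N)^{O(1)}$ absorbs the factor $a/\phi(a)$ arising from primes dividing $a$. For $a > N/2$ this inner count is trivially $O(1)$. Next, by the Hardy--Ramanujan inequality combined with Rankin's trick (as in the derivation of \eqref{FirstTerm} with $\alpha = 1+\eps$), for $M \ge 3$,
\[
\#\{a \le M : \omega(a) \ge K\} \ll \frac{M}{\log M}(\log N)^{\tau_\eps + o(1)},
\]
where $\tau_\eps := (1+\eps)(1 - \log(1+\eps)) = 1 - \eps^2/2 + O(\eps^3)$ is strictly less than $1$.

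I would then combine these two bounds via a dyadic decomposition over $a \in (N/2^{k+1}, N/2^k]$ for $k = 0, 1, \dots, \lfloor\log_2 N\rfloor$. In the $k$-th range (with $k \ge 1$), multiplying the sieve bound $\asymp 2^k/k$ by the Hardy--Ramanujan count $\ll (N/2^k)(\log N)^{\tau_\eps}(\log\log N)^{O(1)}/(\log N - k\log 2)$ yields a contribution of
\[
\ll \frac{N(\log N)^{\tau_\eps}(\log\log N)^{O(1)}}{k(\log N - k\log 2)}.
\]
The elementary estimate $\sum_{k=1}^{\lfloor\log_2 N\rfloor-1}[k(\log N - k\log 2)]^{-1} \ll \log\log N/\log N$ together with the trivial $k=0$ bound $\#\{a \le N: \omega(a) \ge K\} \cdot O(1)$ then gives
\[
\#\cL_2 \ll N(\log N)^{\tau_\eps - 1}(\log\log N)^{O(1)} = o(N),
\]
as desired. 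The main subtle point is tracking the $(\log\log N)^{O(1)}$ uniformly in $a$ through Brun's sieve (in particular handling the contribution from primes dividing $a$, which is where the singular-series factor $a/\phi(a)$ enters), but this proceeds exactly as in the sieve calculation of Section \ref{UpperSection}.
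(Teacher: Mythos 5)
Your proof is correct, but it takes a genuinely different route from the paper's. The paper first uses the exclusion of $\cL_1$ to extract a prime factor $\ell>N^{1/\log\log N}$ of $ab$, then runs Brun's sieve on the two simultaneous conditions ``$\ell$ prime and $a^2+b^2\ell^2$ prime'' exactly as in \S\ref{UpperSection}, getting $S_{a,b}\ll N(\log\log N)^{O(1)}/(ab(\log N)^2)$; summing $1/b$ costs one power of $\log N$, and the net saving of $(\log N)^{-1}$ beats the Hardy--Ramanujan-type bound $\sum_{\omega(a)\ge T}a^{-1}\ll(\log N)^{(1+\eps)(1-\log(1+\eps))}$. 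You instead ignore $\cL_1$ entirely, fix $a$, and sieve the single condition ``$a^2+b^2$ prime'' over $b\le N/a$, which saves only $1/\log(N/a)$ and therefore degrades as $a\to N$; the dyadic decomposition plus the harmonic-type estimate $\sum_k[k(\log N-k\log2)]^{-1}\ll\log\log N/\log N$ recovers the full $(\log N)^{-1+o(1)}$ saving. Both arguments land on $\#\cL_2\ll N(\log N)^{(1+\eps)(1-\log(1+\eps))-1}(\log\log N)^{O(1)}=o(N)$. What your version buys: it is more self-contained (a one-condition sieve, no large-prime-factor device) and it actually bounds the larger set where the condition $P(ab)>N^{1/\log\log N}$ is dropped. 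What the paper's version buys: it reuses verbatim the sieve setup $S(a,b_0)$ already verified in \S\ref{UpperSection} (including the check of \eqref{BrunHyp} and the level conditions) and avoids the dyadic bookkeeping. Two small points to tighten in your write-up: the tail estimate $\#\{a\le M:\omega(a)\ge K\}\ll (M/\log M)(\log N)^{\tau_\eps+o(1)}$ is not Rankin's trick but just the tail of the Hardy--Ramanujan sum as in the derivation of \eqref{FirstTerm}, and you should note explicitly that it is applied uniformly in $M=N/2^k$ by bounding $\log\log M+c_0$ above by $\log\log N+c_0$ (the tail is still dominated by its first term since $K\ge(1+\eps/2)(\log\log M+c_0)$); also the uniformity of the $(\log\log N)^{O(1)}$ in the sieve does need the factor $\prod_{p\mid a}(1+O(1/p))\ll(\log\log N)^{O(1)}$, as you correctly flag.
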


\begin{proof}
As $\# \cL_2 = \# \cL_3$, we need only show this for $j=2$. Taking out a prime factor $\ell > N^{1/\log \log N}$ of $ab$, we have
\[
\# \cL_2 \le 2\sum_{\substack{a \le N^{1-1/\log \log N} \\ \omega(a) > (1+\eps) \log \log N } } \sum_{b \le a^{-1} N^{1- 1/\log \log N}} S_{a,b},
\]
where
\[
S_{a,b} =  \sum_{\substack{N^{1/\log \log N} < \ell \le \frac N{ab} \\ \ell,\: a^2 + b^2 \ell^2 \in \cP}}1.
\]
As in the last section, Brun's sieve implies that
\[
S_{a,b} 
\ll \frac{N(\log \log N)^{O(1)}}{ab (\log N)^2}.
\]

Therefore
\begin{equation}
\label{lem3.3eq}
\# \cL_2 \ll  \frac {N(\log \log N)^{O(1)}} {\log N} \sum_{\substack{a \le N^{1-1/\log \log N}\\\omega(a) \ge T} }a^{-1},
\end{equation}
where
\begin{equation}
 \label{Tdef}
T = \lfloor (1+\eps) \log \log N \rfloor.
\end{equation}
As in the prior section, the multinomial theorem implies that
\begin{align*}
\sum_{\substack{a\le N^{1-1/\log\log N}\\\omega(a)\ge T}}\frac1a&
\le\sum_{j\ge T}\frac1{j!}\left(\log\log N+c_1\right)^j
\ll_\eps \frac1{T!}(\log\log N+c_1)^T\\
&\le \left(\frac{e\log\log N+ec_1}{T}\right)^T\ll(\log N)^{(1+\eps)(1-\log(1+\eps))}.
\end{align*}
Since $(1+\eps)(1-\log(1+\eps))<1$, 
using this estimate in \eqref{lem3.3eq} completes the proof of the lemma.
\end{proof}

Combining \eqref{L0bound} with Lemmas \ref{discard0} and \ref{discard} gives
\begin{equation} \label{Lbound}
\# \cL \gg N.
\end{equation}

\begin{lemma} \label{L2}
If $c' > \log 4 -1$ then
\[
\cS(N) \ll_{c'} N (\log N)^{c'}.
\]
\end{lemma}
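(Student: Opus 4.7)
The plan is to parallel the upper-bound proof in Section~\ref{UpperSection}, now applied to ordered quadruples subject to two primality conditions. First, setting $a_1=\gcd(a,c)$, $a_2=\gcd(a,d)$, $b_1=\gcd(b,c)$, $b_2=\gcd(b,d)$ yields a bijection between pairs $((a,b),(c,d))$ with $\gcd(a,b)=\gcd(c,d)=1$ and $ab=cd$, and pairwise coprime quadruples $(a_1,a_2,b_1,b_2)$, via $a=a_1a_2$, $b=b_1b_2$, $c=a_1b_1$, $d=a_2b_2$. Under this change of variables the two primality conditions become
\[
(a_1a_2)^2+(b_1b_2)^2\in\cP\qquad\text{and}\qquad (a_1b_1)^2+(a_2b_2)^2\in\cP,
\]
while $(a,b),(c,d)\in\cL$ forces $\omega(a_1a_2),\omega(b_1b_2),\omega(a_1b_1),\omega(a_2b_2)\le T:=\lfloor(1+\eps)\log\log N\rfloor$ (so in particular $\omega(a_1a_2b_1b_2)\le 2T$), together with the existence of a prime factor $\ell>N^{1/\log\log N}$ of $n':=a_1a_2b_1b_2$. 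By pairwise coprimality $\ell$ divides exactly one of the four factors; by symmetry, at the cost of a factor of $4$, I specialize to the case $\ell\mid a_1$ and write $a_1=\ell a_1'$, so that $a_1'a_2b_1b_2\le N^{1-1/\log\log N}$.

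For each fixed $(a_1',a_2,b_1,b_2)$, I would then apply Brun's sieve to bound the number of primes $\ell\le X:=N/(a_1'a_2b_1b_2)$ for which both $\ell^2A_1^2+B_1^2$ and $\ell^2A_2^2+B_2^2$ are prime, where $A_1=a_1'a_2$, $B_1=b_1b_2$, $A_2=a_1'b_1$, $B_2=a_2b_2$. The sieve polynomial $f(\ell)=\ell(\ell^2A_1^2+B_1^2)(\ell^2A_2^2+B_2^2)$ has degree $5$, and for a prime $p\nmid 2A_1A_2B_1B_2$ the congruence $f(\ell)\equiv 0\pmod p$ has the linear root $\ell\equiv 0$, plus two roots from each quadratic factor whenever $-1$ is a quadratic residue modulo $p$, that is, when $p\equiv 1\pmod 4$; otherwise only the linear root survives. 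Hence $g(p)p=5$ for $p\equiv 1\pmod 4$ and $g(p)p=1$ for $p\equiv 3\pmod 4$, while $g(p)\ll 1/p$ at the finitely many bad primes. By Mertens in arithmetic progressions, $\sum_{p<z}g(p)=3\log\log z+O(1)$, so $V(z):=\prod_{p<z}(1-g(p))\asymp(\log z)^{-3}$. Taking $z=N^{(\log\log N)^{-3}}$ as in Section~\ref{UpperSection} and invoking \cite[Corollary~6.2]{FI2010} yields
\[
\#\{\ell\le X:\gcd(f(\ell),P(z))=1\}\ll\frac{X(\log\log N)^{O(1)}}{(\log N)^3}.
\]

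Summing this over the remaining variables, and using that for each $n'$ the number of ordered pairwise coprime factorizations into four parts is $4^{\omega(n')}$, gives
\[
\cS(N)\ll\frac{N(\log\log N)^{O(1)}}{(\log N)^3}\sum_{\substack{n'\le N\\\omega(n')\le 2T}}\frac{4^{\omega(n')}}{n'}.
\]
The multinomial theorem together with \eqref{eq:c1} bounds the inner sum by $\sum_{k\le 2T}(4\log\log N+O(1))^k/k!$. Since $2T<4\log\log N$ for $\eps<1$, the terms are monotone increasing in $k$, so the sum is within a constant of its last term; applying $k!\ge(k/e)^k$, that last term is at most $(\log N)^{2(1+\eps)(1+\log 2-\log(1+\eps))+o(1)}$, which tends to $(\log N)^{2+\log 4}$ as $\eps\to 0$. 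Combining the two estimates yields $\cS(N)\ll N(\log N)^{\log 4-1+o(1)}$, and the lemma follows for any fixed $c'>\log 4-1$ on taking $\eps$ sufficiently small. The main obstacle is the local-density calculation: obtaining the factor $3$ in $\sum_{p<z}g(p)=3\log\log z+O(1)$ (from $g(p)p=5$ at $p\equiv 1\pmod 4$ and $g(p)p=1$ at $p\equiv 3\pmod 4$) is essential, as any weaker bound on $V(z)$ would fail to produce the sharp exponent $\log 4-1$.
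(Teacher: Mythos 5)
Your strategy is the same as the paper's: your gcd decomposition $(a_1,a_2,b_1,b_2)$ is exactly the paper's $(g,u,v,w)$ (with $a=gu$, $b=vw$, $c=gv$, $d=uw$), followed by the same extraction of a prime factor $\ell>N^{1/\log\log N}$, a Brun sieve on the degree-$5$ polynomial $\ell(\ell^2A_1^2+B_1^2)(\ell^2A_2^2+B_2^2)$ saving $(\log N)^{-3}$, and the same multinomial-theorem summation producing the exponent $2(1+\eps)(1+\log 2-\log(1+\eps))-3\to\log 4-1$. Those computations are correct as far as they go.

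The gap is that you never separate off the diagonal $(a,b)=(c,d)$, and there your local-density claim fails. In your parametrisation the two quadratic factors have the same roots modulo $p$ (for $p\nmid 2A_1A_2B_1B_2$) precisely when $p\mid (B_1A_2)^2-(B_2A_1)^2=(a_1'b_2)^2(b_1^4-a_2^4)$; by pairwise coprimality, $a_2=b_1$ forces $a_2=b_1=1$, which is exactly the diagonal, and then the two forms are \emph{identical}, so $f(\ell)\equiv 0 \pmod p$ has only $3$ roots for \emph{every} $p\equiv 1\pmod 4$. With $g(p)p=5$ the sieve hypothesis \eqref{BrunHyp} is then violated ($r_p\approx -2X/p$), and no choice of density rescues the $(\log N)^{-3}$ saving, since there are genuinely only two primality conditions. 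This is why the paper insists on $u\ne v$ (``three primality conditions'') and disposes of the diagonal separately with a two-condition sieve, showing it contributes $O(N(\log\log N)^{O(1)})$; you need to do the same. A second, minor omission: even off the diagonal, for the $O(\log N)$ primes $p\mid b_1^4-a_2^4$ (a nonzero integer once $a_2\ne b_1$) the correct density is $3/p$ rather than $5/p$. Since $\sum_{p\mid M}1/p\ll\log\log\log M$, redefining $g$ at these primes only perturbs $V(z)$ by a factor $(\log\log N)^{O(1)}$ and is harmless, but it must be done for \eqref{BrunHyp} to hold; your statement that the bound would ``fail'' without the exact constant $3$ in $\sum_{p<z}g(p)$ is therefore slightly overstated --- what matters is $3\log\log z-O(\log\log\log N)$.
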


\begin{proof}
One component of the count is when $(a,b)=(c,d)$.  This is the diagonal case, and it is
easily estimated.   By the sieve, the number of pairs $(a,b)\in\cL$ with $a\le b$ is at most
\[
\sum_{a\le\sqrt{N}}\sum_{b\le N^{1-1/\log\log N}/a}\sum_{\substack{\ell\le N/ab\\a^2+\ell^2b^2\in\cP}}1
\le \frac{N(\log\log N)^{O(1)}}{(\log N)^2}\sum_{a,b}\frac1{ab}\le N(\log\log N)^{O(1)},
\]
 which is negligible.  (Note that this estimate shows that  \eqref{Lbound} is essentially tight.)
 
   For the nondiagonal case we imitate 
 \S \ref{UpperSection}. If $(a,b,c,d)$ is counted by $\cS(N)$, put
\[
g =\gcd(a,c), \quad a = gu, \quad c= gv,
\]
so that
\[
ub = vd, \quad d = uw, \quad b = vw.
\]
Recall \eqref{Tdef}, and let $\cG$ be the set of $(g,u,v,w_0) \in \bN^4$ such that
\[
guvw_0 \le N^{1-1/ \log \log N}, \qquad \omega(gu), \omega(vw_0), \omega(gv), \omega(uw_0) \le T,\qquad u\ne v.
\] 
As $P(ab) > N^{1/ \log \log N}$, we see by symmetry that
\begin{equation} \label{cS1}
\cS(N) \ll N(\log\log N)^{O(1)} + \sum_{(g,u,v,w_0) \in \cG} S(g,u,v,w_0),
\end{equation}
where
\[
S(g, u ,v, w_0) = \sum_{\substack{\ell\in\cP,\,N^{1/\log \log N} < \ell \le \frac N{guvw_0} \\ (gu)^2 + (vw_0)^2 \ell^2, \;(gv)^2 + (uw_0)^2 \ell^2 \in \cP}} 1.
\]

The fact that $u \ne v$ ensures that there are three primality conditions defining $S(g,u,v,w_0)$. To bound $S(g,u,v,w_0)$ from above, we may assume without loss that $guvw_0$ is even, and that the variables $g,u,v,w_0$ are pairwise coprime, for otherwise $S(g,u,v,w_0) = 0$. 
Paralleling \S \ref{UpperSection}, an application of Brun's sieve reveals that
\begin{equation} \label{FinalBrun}
S(g, u ,v, w_0) \ll \frac{ N (\log \log N)^{O(1)}} {guvw_0 (\log N)^3}.
\end{equation}

Substituting \eqref{FinalBrun} into \eqref{cS1} yields
\begin{equation} \label{cS2}
\cS(N) \ll N(\log\log N)^{O(1)} + \frac{N (\log \log N)^{O(1)}}{(\log N)^3} \cI,
\end{equation}
where 
\[
\cI = \sum_{k_1 + \cdots + k_4 \le 2T} \prod_{i =1}^4 \Bigl(\sum_{n \le N:\, \omega(n) = k_i} n^{-1} \Bigr)
\]
and $T$ is as in \eqref{Tdef}.  
With $U = 2T$, it follows from the multinomial theorem that
\begin{align*}
\cI &\le \sum_{k_1 + \cdots + k_4 \le U} \prod_i k_i!^{-1} \Bigl(\sum_{p^v \le N} p^{-v}\Bigr)^{k_i}
\\
&=  \sum_{m  \le U} m!^{-1} \sum_{k_1 + \cdots + k_4 = m} { m \choose k_1, k_2, k_3, k_4 } 
\Bigl( \sum_{p^v \le N} p^{-v} \Bigr)^m,
\end{align*}
and a further application of the multinomial theorem gives
\[
\cI \le \sum_{m  \le U} m!^{-1} \Bigl(4 \sum_{p^v \le N} p^{-v} \Bigr)^m \le \sum_{m \le U} \frac{(4 \log \log N + 4c_1)^m}{m!}.
\]
As $U = 2(1+\eps)\log \log N+O(1)$, we now have
\begin{align*}
\cI &\ll \frac{ (4 \log \log N +4c_1)^U}{U!}<\left(\frac{4e\log\log N+4ec_1}{U}\right)^U\\
&=\left(\frac{4e}{2+2\eps}+O\left(\frac1U\right)\right)^U\ll(\log N)^{2(1+\eps)(1+\log2-\log(1+\eps))}.
\end{align*}
Substituting this into \eqref{cS2} yields
\begin{align*} 
\cS(N)& \ll N(\log \log N)^{O(1)} (\log N)^{2(1+\eps)(1+\log 2 - \log (1+\eps)) - 3}\nonumber\\
&\le N(\log\log N)^{O(1)}(\log N)^{\log4-1+2\eps(1+\log2)}.
\end{align*}
As $c' > \log 4 -1$, we may choose $\eps > 0$ to give
$\cS(N) \ll_{c'} N (\log N)^{c'}$.
\end{proof}

Combining \eqref{Cauchy} and \eqref{Lbound} with Lemma \ref{L2} establishes Theorem \ref{LowerBound}.

\section{A final comment}

We conjecture that Theorem \ref{UpperBound} holds with equality.   For a lower bound, one might
restrict attention to those pairs $(a,b)$ with $\omega(a)\approx\omega(b)\approx\frac1{2\log2}\log\log N$.
The upper bound for the second moment is analysed as in the paper, getting $N/(\log N)^{\eta+o(1)}$; we expect that a more refined analysis would give
\[
\frac {N (\log \log N)^{O(1)}} {(\log N)^\eta}
\]
here. The difficulty is in obtaining this same estimate as a lower bound for the first moment.
This  would follow if we had an analogue of Theorem \ref{HL} in which $a,b$ have a restricted number of prime
factors.  Such a result holds for the general distribution of Gaussian primes, at least if one restricts
only one of $a,b$, see \cite{FI}.

\providecommand{\bysame}{\leavevmode\hbox to3em{\hrulefill}\thinspace}

\end{document}